\newtheorem{theorem}{Theorem}[section]
\newtheorem*{theorem*}{Theorem}
\theoremstyle{corollary}
\newtheorem{corollary}{Corollary}[theorem]
\newtheorem*{corollary*}{Corollary}
\theoremstyle{remark}
\newtheorem{remark}[theorem]{Remark}
\theoremstyle{corollary}
\newtheorem{lemma}[theorem]{Lemma}
\newtheorem{proposition}[theorem]{Proposition}
\numberwithin{equation}{section}
\title{Extension of derivations to forms}
\author{Manujith K. Michel}
\address{Indian Institute of Science Education and Research Mohali, Punjab 140306, India}
\email{manujithkmichel@gmail.com}
\author{Chitrarekha Sahu}
\email{chitrarekhasahu97@gmail.com}
\begin{document}

	\maketitle
\begin{abstract}
The problem of extending derivations of a field $F$ to an $F-$algebra $B$ is widely studied in commutative algebra and non-commutative ring theory. For example, every derivation of $F$ extends to $B$  if $B$ is a separable algebraic extension or a central simple algebra over $F.$ We unify and generalize these results by showing that a derivation $d$ of $F$ with the field of constants $C$ extends to a finite dimensional algebra $B$ if $B$ is a form of some $C-$algebra having a smooth automorphism scheme $\rm G$. Furthermore, we show that the set of derivations of $B$ that extend the derivation $d$ of $F$ is in bijection with the set of derivations $\delta$ such that $(Y,\delta)$ is a differential $\rm G_F-$torsor where $Y$ is the $\rm G_F-$torsor corresponding to $B$.
\end{abstract}

\section{Introduction}
	Let $R$ be a ring. An additive map $\delta: R\longrightarrow R$ is called a \emph{derivation} of $R$ if it satisfies the Leibniz rule, that is $\delta(ab)=\delta(a)b+a\delta(b)$ for all $a,b\in R$. An element $r\in R$ is said to be a $\emph{constant}$ of $R$ under the derivation $\delta$ if $\delta(r)=0$. In this article, we fix a field $F$ with a derivation $': F\longrightarrow F, x\mapsto x'$, and the set of constants of $F$ under this derivation is denoted by $C$. One can easily see that $C$ is a subfield of $F$. Let $\bar{F}$ denote the separable closure of $F$. From \cite{magid1994lectures}, the derivation $x\mapsto x'$ can be uniquely extended to $\bar F$ and is again denoted by $':\bar F\longrightarrow \bar F$. For a matrix $M=(m_{ij})\in M_n(\bar F)$, let $M'$ denote the matrix $(m'_{ij})$.\par
    
	 For a finite-dimensional $C-$algebra $A$, the automorphism scheme of $A$ is given by the functor ${\rm Aut}_{A}: S\mapsto {\rm Aut}_A(S):={\rm Aut}(A\otimes_C S/S)$ for every commutative $C-$algebra $S$, where ${\rm Aut}(A\otimes_C S/S)$ is the group of all $S-$linear automorphisms of $A\otimes _C S$. It is known that the functor ${\rm Aut}_{A}$ is an algebraic group scheme over $C$ \cite{serre1979galois}.  \par
    
	A \emph{form} of a  $C-$algebra $A$ over $F$ is an $F-$algebra $B$ such that $A\otimes_C \bar{F}\cong B\otimes_C \bar{F}$ as $\bar F-$algebras. It can be observed that the separable extensions and finite-dimensional central simple algebras over $F$ of degree $n$ are forms of $\prod^n C$ and ${\rm M_n}(C)$ respectively. Also the automorphism schemes of $\prod^n C$ and ${\rm M_n}(C)$ are smooth. It is well known that the derivations of $F$ can be extended to separable field extensions of $F$ and finite-dimensional central simple algebras over $F$\cite{amitsur1982extension}. In this paper, we will generalize these results by showing that the derivation of $F$ can be extended to any form of a given $C-$algebra $A$ over $F$ such that ${\rm Aut}_A$ is smooth. Let $A$ be a $C-$algebra with smooth automorphism scheme ${\rm G}:={\rm Aut}_A$. We fix a basis $\{e_1,\dots,e_n\}$ of $A$ over $C$ and identify ${\rm G}$ with a closed subscheme of ${\rm GL_n}$ and its Lie algebra $\mathfrak g$ with a subalgebra of ${\rm M_n}(C)$. The first Galois cohomology $H^1({\rm Gal}(\bar F/F), {\rm G}({\bar F}))$, where ${\rm Gal}(\bar F/F)$ is the Galois group of $\bar F$ over $F$ is in bijective correspondence with the set of isomorphism classes of forms of $A$ over $F$. Let $f\in H^1({\rm Gal}\bar F/F), {\rm G}({\bar F}))$ then there exists $P_f\in {\rm GL_n}(\bar F)$ such that $$f:{\rm Gal}(\bar F/F)\rightarrow {\rm G}(\bar F)
    $$ is given by, 
    \begin{equation}
     \sigma \mapsto f_{\sigma}:= P^{-1}_f\sigma(P_f) \label{fmap}
    \end{equation}
    
    for every $\sigma\in {\rm Gal}(\bar F/ F)$. We will show that the set of derivations of the corresponding form $A_f$ extending the derivation of $F$ can be identified with the set $[P^{-1}_f\mathfrak g_{\bar F}P_f+(P_f^{-1})'P_f]\bigcap {\rm M_n}(F)$, where $\mathfrak g_{\bar F}$ is the Lie algebra of the group scheme ${\rm G}_{\bar F}$ obtained by base change. For the above $P_f$ the set $[P^{-1}_f\mathfrak g_{\bar F}P_f+(P_f^{-1})'P_f]\bigcap {\rm M_n}(F)$ is non-empty \cite{juan2007equivariant} and therefore there exists a derivation of the form $A_f$ extending the derivation of $F$.   \par
	
 A \emph{${\rm G}_F-$torsor} is an affine scheme $Y$ of finite type over $F$ with a morphism $\rho:{\rm G}_F \times_F Y\longrightarrow Y$ of schemes such that for every $F$-algebra $R$, $\rho_R:G_F(R)\times_F Y(R)\longrightarrow Y(R)$ defines a left group action of ${\rm G}_F(R)$ on $Y(R)$ and the morphism $(\rho, p_2):{\rm G}_F\times_F Y\longrightarrow Y\times_F Y$ is an isomorphism of schemes, where $p_2$ is the projection on $Y$. If in addition, there is a derivation $\delta$ of $F[Y]$ that extends the derivation of $F$ such that the diagram

\[ \begin{tikzcd}
F[Y] \arrow{r}{\rho_*} \arrow[swap]{d}{\delta} & C[{\rm G}]\otimes_F F[Y] \arrow{d}{{\rm id}\otimes\delta} \\%
F[Y] \arrow{r}{\rho_*}&  C[{\rm G}]\otimes_F F[Y]
\end{tikzcd}
\]
 
commutes then the pair $(Y,\delta)$ is called a \emph{differential $G_F$-torsor}. The map $\rho_*:F[Y]\longrightarrow F[G]\otimes_F F[Y]\cong C[{\rm G}]\otimes_C F[Y]$ is the homomorphism induced by $\rho$ on the coordinate rings. We see that
for a ${\rm G}_F-$torsor $Y$, $(Y,\delta)$ is differential ${\rm G}_F-$torsor if and only if the derivation $$h\otimes x\mapsto \delta(h)\otimes x+h\otimes x', \quad h\in F[Y], \  x\in \bar F$$ of $F[Y]\otimes_F \bar F$ commutes with the action of ${\rm G}(\bar F)$ on $F[Y]\otimes_F \bar F$. More details on differential torsors can be found in \cite{bachmayr2018differential}. The set $H^1({\rm Gal}(\bar F/F), {\rm G}({\bar F}))$ is in bijection with the set of isomorphism classes of $G_F$-torsors over $F$. From \cite[Section 5]{juan2007equivariant},  we can see that, for a fixed cocycle $f\in H^1({\rm Gal}(\bar F/F), {\rm G}({\bar F}))$, if $Y_f$ is the corresponding ${\rm G}_F$-torsor then the set of derivations $\{\delta \ :\ (Y_f,\delta) \ $is a differential \ ${\rm G}_F-$torsor$\}$ is characterized by set $[P^{-1}_f\mathfrak g_{\bar F}P_f+(P_f^{-1})'P_f]\bigcap {\rm M_n}(F)$, where $P_f$ is same as in Equation (\ref{fmap}). So from all the above findings, we conclude this note by saying that the derivations of a form $A_f$ of $A$ over $F$ extending the derivation of $F$ are classified by the derivations $\delta$ of $F[Y_f]$ such that $(Y_f, \delta)$ is a differential ${\rm G}_F-$torsor. This last conclusion is a generalization of Theorem 5.1 in\cite{TSUI202449} which is proved using the Picard-Vessiot theory under the assumption that $C$ is algebraically closed.

\section*{Acknowledgements}
The authors thank their supervisor, Dr. Varadharaj R Srinivasan, for insightful discussions. The first author is funded by the Council of Scientific and Industrial Research (CSIR), and the second author is funded by the University Grants Commission (UGC).
	 
	 \section{Extension of derivation}
Let $C$ be a field and $A$ be a $C-$algebra with smooth automorphism scheme $\rm G$. Recall that for any $C-$algebra $S,{\rm G}(S):={\rm Aut}(A\otimes_C S/S)$. Now we shall observe that the $C-$linear derivations of $A$ can be identified with the Lie algebra of the group scheme ${\rm G}$. Let $A[x]$ be a polynomial ring in $x$ over $A$ such that $x$ commutes with the elements of $A$ and $\epsilon$ be the image of $x$ in $\frac{A[x]}{\langle x^2\rangle}$. We know that a map $\delta:A\longrightarrow A$ is a $C-$linear derivation of $A$ if and only if the map
\begin{align*}
A\longrightarrow A[\epsilon]  \\ 
a\mapsto a+\delta(a)\epsilon & \quad \text{for all} \ a\in A,
\end{align*}
is a $C-$algebra homomorphism\cite{magid1994lectures}. Let $\eta: A[\epsilon]\longrightarrow A$ be the $C-$linear map defined by $\eta(a+b\epsilon)=a$ for every $a,b\in A.$ If $\mathfrak g$ denotes the lie algebra of $G$ then using \cite[Section 12.2]{waterhouse2012introduction}, $$ \mathfrak g=\{\phi\in {\rm G}(C[\epsilon])\ : \ \phi_C=e_C\in {\rm G}(C)\},$$ where $e_C$ is the identity element of ${\rm G}(C)={\rm Aut}(A/C)$ and for every $\phi\in {\rm G}(C[\epsilon]), \phi_C$ is the composition map, 
	\[ A\hookrightarrow A[\epsilon]\xrightarrow{\phi} A[\epsilon]\xrightarrow{\eta} A .\]
Let $\phi \in  \mathfrak g, a\in A$ and $\phi(a)=a_0+a_1\epsilon$ for some $a_0,a_1\in A$ then $a=\phi_C(a)=a_0$. Therefore, $\phi(a+b\epsilon)=\phi(a)+\phi(b)\epsilon=a+a_1\epsilon+b\epsilon$. It can be easily checked that the map $\delta_{\phi}:A\rightarrow A$ defined by $\delta_{\phi}(a)=a_1$ is a $C-$linear derivation of $A$. 
	
	\begin{lemma} \label{phitodeltaphi}
		The map $\phi\mapsto \delta_{\phi}$ is a bijection from $\mathfrak g$ to the set of all $C-$linear derivations of $A$.
	\end{lemma}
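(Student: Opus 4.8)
The plan is to establish that $\phi \mapsto \delta_\phi$ is a bijection by constructing an explicit two-sided inverse, rather than separately checking injectivity and surjectivity. The construction of $\delta_\phi$ from $\phi$ has already been carried out in the text preceding the statement, and it has been observed that $\delta_\phi$ is a $C$-linear derivation. So the work that remains is to build the reverse map $\delta \mapsto \phi_\delta$ sending a $C$-linear derivation to an element of $\mathfrak{g}$, and to verify the two compositions are identities.

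First I would use the characterization of derivations recalled just before the lemma: a $C$-linear map $\delta \colon A \to A$ is a derivation if and only if the map $\psi_\delta \colon A \to A[\epsilon]$ defined by $a \mapsto a + \delta(a)\epsilon$ is a $C$-algebra homomorphism. The key observation is that $\psi_\delta$ is in fact a \emph{$C[\epsilon]$-algebra} automorphism of $A[\epsilon] = A \otimes_C C[\epsilon]$ once extended $C[\epsilon]$-linearly: explicitly, extend $\psi_\delta$ to $\Phi_\delta \colon A[\epsilon] \to A[\epsilon]$ by setting $\Phi_\delta(a + b\epsilon) = a + (\delta(a) + b)\epsilon$. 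I would check that $\Phi_\delta$ is a $C[\epsilon]$-algebra homomorphism (using that $\psi_\delta$ is a homomorphism and that $\epsilon^2 = 0$) and that it is invertible, with inverse given by $a + b\epsilon \mapsto a + (b - \delta(a))\epsilon$, which corresponds to the derivation $-\delta$. Thus $\Phi_\delta \in {\rm G}(C[\epsilon]) = {\rm Aut}(A[\epsilon]/C[\epsilon])$. Finally I would verify $(\Phi_\delta)_C = e_C$, i.e.\ that reducing modulo $\epsilon$ recovers the identity automorphism of $A$; this is immediate since $\eta(\Phi_\delta(a)) = \eta(a + \delta(a)\epsilon) = a$. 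Hence $\Phi_\delta \in \mathfrak{g}$, and I set $\phi_\delta := \Phi_\delta$.

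The two compositions are then routine to verify. Starting from a derivation $\delta$, the element $\phi_\delta = \Phi_\delta$ sends $a \mapsto a + \delta(a)\epsilon$, so by the very definition of $\delta_\phi$ (reading off the $\epsilon$-coefficient $a_1$ of $\phi(a)$) we get $\delta_{\phi_\delta} = \delta$. Conversely, starting from $\phi \in \mathfrak{g}$, the analysis preceding the lemma shows that $\phi(a) = a + \delta_\phi(a)\epsilon$ for all $a \in A$ and that $\phi$ acts $C[\epsilon]$-linearly with $\phi(a + b\epsilon) = a + (\delta_\phi(a) + b)\epsilon$, which is exactly $\Phi_{\delta_\phi}$; hence $\phi_{\delta_\phi} = \phi$. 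This gives the bijection.

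The step requiring the most care is confirming that $\Phi_\delta$ genuinely lands in ${\rm G}(C[\epsilon]) = {\rm Aut}(A[\epsilon]/C[\epsilon])$, meaning it is a \emph{bijective} $C[\epsilon]$-algebra homomorphism and not merely an endomorphism. The homomorphism property reduces to the Leibniz rule through the stated equivalence, but invertibility and $C[\epsilon]$-linearity must be checked directly; the clean way is to exhibit the explicit inverse and note that the two maps compose to the identity, using $\epsilon^2 = 0$ throughout. Everything else is bookkeeping with the definitions of $\phi_C$, $\delta_\phi$, and $\eta$ already set up in the excerpt.
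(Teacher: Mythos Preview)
Your proposal is correct and follows essentially the same approach as the paper: construct the inverse map $\delta \mapsto \phi_\delta$ as the $C[\epsilon]$-linear extension of $a \mapsto a + \delta(a)\epsilon$, check it lands in $\mathfrak{g}$, and verify the two compositions are identities. You are simply more explicit than the paper in exhibiting the inverse of $\Phi_\delta$ (via $-\delta$) and in spelling out the verification that the maps are mutually inverse, which the paper dispatches with the word ``clearly.''
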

\begin{proof}
   Let $\phi \in  \mathfrak g$ then $\phi$ is a $C-$algebra automorphism of $A[\epsilon]$. Let $ a\in A$ then $\phi(a)=a_0+a_1\epsilon$ for some $a_0,a_1\in A$. Since $a=\phi_C(a)=\eta\circ \phi(a)$ we get $a=a_0.$ This implies that the map $a\mapsto a+a_1\epsilon$ gives a $C-$algebra homomorphism from $A$ to $A[\epsilon]$ which gives a dervation $\delta_{\phi}:a\mapsto a_1$ of $A$. Conversely, if $\delta$ is a derivation of $A$ then the $C[\epsilon]-$linear extension $\phi_{\delta}$ of the map $a\mapsto a+\delta(a)\epsilon$ is a $C-$algebra automorphism of  $A[\epsilon]$ such that the restriction of $\eta\circ \phi_{\delta}$ to $A$ is an identity map that is $\phi_{\delta}\in \mathfrak g$. Clearly, the maps $\phi\mapsto \delta_{\phi}$ and $\delta \mapsto \phi_{\delta}$ are inverse of each other. 
\end{proof}	
	From the above Lemma (\ref{phitodeltaphi}), we can identify the $C-$linear derivations of $A$ with $\mathfrak g$. Let $F$ be a differential field with the set of constants $C$ and $\delta_0$ be the derivation of $A\otimes_C\bar F$ defined by $\delta_0(a\otimes f)=a\otimes x'$ for every $a\in A$ and $x\in \bar F$.

Let $\{e_1,\dots, e_n\}$ be a basis of $A$ over $C$, $f \in H^1({\rm Gal}(\bar F/F), {\rm G}({\bar F}))$ and $P_f\in {\rm GL_n}(\bar F)$ be as in Equation (\ref{fmap}). The form $A_f$ corresponding to $f$ is given by the invariants of the twisted action $$*:Gal(\bar F/F)\times (A\otimes_C \bar F)\rightarrow A\otimes_C\bar F $$ $$ (\sigma, a\otimes x)\mapsto f_{\sigma}(a\otimes\sigma x)$$ for every $\sigma\in Gal(\bar F/F), a\in A$ and $x \in \bar F$. Denote the transpose of $[e_1\otimes 1,\dots, e_n\otimes 1]$ by $V$ then the entries of $P^{-1}_fV$ are invariant under the twisted action and form a basis of $A_f$.

\begin{remark}
Observe that from Lemma (\ref{phitodeltaphi}) we can identify $\mathfrak g_{\bar F}$ with the set of all $ M\in {\rm M_n}(\bar F)$ such that the $\bar F-$linear map given by $\delta_M (V)=MV$ is a derivation of $A\otimes_C \bar F.$
\end{remark}

\begin{proposition}\label{mainresult}
	Let $f\in H^1({\rm Gal}(\bar F/F), {\rm G}({\bar F}))$ then there is a bijection from the set of all derivations of the corresponding form $A_f$ extending the derivation of $F$ to the set $$\{P^{-1}_fMP_f+(P^{-1}_f)'P_f : M\in \mathfrak g_{\bar F}\} \bigcap {\rm M_n}(F).$$
\end{proposition}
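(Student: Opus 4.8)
The plan is to compare a derivation of the form $A_f$ with its unique extension to $A\otimes_C\bar F$ and to read off the matrix it induces on the basis coming from $P_f$. Write $W=P_f^{-1}V$, whose entries $w_1,\dots,w_n$ form an $F$-basis of $A_f$ as noted above. Given a derivation $D$ of $A_f$ extending the derivation of $F$, I would first extend it to the derivation $\tilde D$ of $A_f\otimes_F\bar F\cong A\otimes_C\bar F$ determined by $\tilde D(b\otimes x)=D(b)\otimes x+b\otimes x'$ for $b\in A_f$, $x\in\bar F$; this is the unique derivation of $A\otimes_C\bar F$ that restricts to $D$ on $A_f$ and to $'$ on $\bar F$, since $A\otimes_C\bar F$ is generated as a ring by these two subalgebras and a derivation is determined by its values on generators.

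Next I would identify $\tilde D$ with an element of $\mathfrak g_{\bar F}$. Both $\tilde D$ and $\delta_0$ extend the derivation $'$ of $\bar F$, so their difference $\tilde D-\delta_0$ is an $\bar F$-linear derivation of $A\otimes_C\bar F$; by the Remark it equals $\delta_M$ for a unique $M\in\mathfrak g_{\bar F}$. As $\delta_0(V)=0$, this gives $\tilde D(V)=MV$. Using $W=P_f^{-1}V$ together with the Leibniz rule and the identity $(P_f^{-1})'P_f=-P_f^{-1}P_f'$ (obtained by differentiating $P_f^{-1}P_f=I$), a direct computation yields
\[
\tilde D(W)=\bigl[P_f^{-1}MP_f+(P_f^{-1})'P_f\bigr]W.
\]
Since each $D(w_i)=\tilde D(w_i)$ lies in $A_f=\bigoplus_j Fw_j$ and the $w_j$ are $\bar F$-linearly independent, the a priori $\bar F$-valued coefficient matrix $N:=P_f^{-1}MP_f+(P_f^{-1})'P_f$ must in fact lie in ${\rm M_n}(F)$, so $D\mapsto N$ sends each derivation into the asserted set.

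For the converse I would run this backwards. Given $N=P_f^{-1}MP_f+(P_f^{-1})'P_f\in{\rm M_n}(F)$ with $M\in\mathfrak g_{\bar F}$, set $\tilde D:=\delta_0+\delta_M$, a derivation of $A\otimes_C\bar F$ extending $'$ on $\bar F$. The same computation shows $\tilde D(W)=NW$, so $\tilde D$ carries the $F$-basis $W$ into $A_f$; because $\tilde D$ is $'$-semilinear over $F$ and $A_f=\bigoplus_j Fw_j$, it follows that $\tilde D(A_f)\subseteq A_f$, and hence $D:=\tilde D|_{A_f}$ is a derivation of $A_f$ extending the derivation of $F$. Uniqueness of $M$ and of the extension $\tilde D$ then make $D\mapsto N$ and $N\mapsto D$ mutually inverse, giving the bijection.

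The main obstacle I anticipate is the equivalence used in both directions: that the membership $N\in{\rm M_n}(F)$ is exactly the condition for $\tilde D$ to preserve the twisted invariants, i.e.\ to restrict to a derivation of $A_f$. Establishing that the restriction lands inside $A_f$ on all of $A_f$ (not merely on the basis $W$) relies on $F$-spanning together with $'$-semilinearity, and verifying that the correspondence is genuinely a bijection requires the uniqueness of both the matrix $M\in\mathfrak g_{\bar F}$ representing $\tilde D-\delta_0$ and the extension $\tilde D$ of $D$.
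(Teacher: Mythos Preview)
Your proposal is correct and follows essentially the same route as the paper's own proof: extend a derivation of $A_f$ to $A\otimes_C\bar F$, subtract $\delta_0$ to obtain an $\bar F$-linear derivation $\delta_M$ with $M\in\mathfrak g_{\bar F}$, and compute the action on $P_f^{-1}V$ to identify $N=P_f^{-1}MP_f+(P_f^{-1})'P_f$; for the converse, start from $\delta_0+\delta_M$ and check it restricts to $A_f$ because $N\in{\rm M_n}(F)$. Your treatment is slightly more explicit than the paper's about why the two assignments are mutually inverse (via uniqueness of the extension $\tilde D$ and of $M$), but the underlying argument is the same.
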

\begin{proof}
We will show that a map $\delta:A_f\longrightarrow A_f$ extending the derivation of $F$ given by $\delta(P^{-1}_fV)=N(P^{-1}_fV)$ where $N\in {\rm M_n}(F)$ is a derivation of $A_f$ if and only if $N\in \{P^{-1}_fMP_f+(P^{-1}_f)'P_f : M\in \mathfrak g_{\bar F}\} \bigcap {\rm M_n}(F).$ Let $N=P^{-1}_fMP_f+(P^{-1}_f)'P_f\in {\rm M_n}(F)$ for some $M \in \mathfrak g_{\bar F}$ then $\delta_M(V)= MV$ gives an $\bar F-$ linear derivation of $A\otimes_C \bar F$ and  $\delta:=\delta_0+\delta_M$ is a derivation of $A\otimes_C \bar F$ extending the derivation of $F$. 
    
We have, $$\delta(P^{-1}_fV)=(\delta_0+\delta_M)(P^{-1}_fV)=(P^{-1}_f)'V+P^{-1}_fMV=(P^{-1}_fMP_f+(P^{-1}_f)'P_f)(P^{-1}_fV)=N(P^{-1}_fV).$$
Since $A_f$ is the $F-$span of the entries of $P^{-1}_fV$ and  $N\in {\rm M_n}(F), \delta$ restricts to a derivation of $A_f$.\\
Conversely, if $\delta$ is a derivation of $A_f$ extending the derivation of $F$ then $\delta(P^{-1}_fV)=N(P^{-1}_fV)$ for some $N\in {\rm M_n}(F)$. The $C-$linear map $$b\otimes x\mapsto \delta(b)\otimes x+b\otimes x', \quad b\in A_f, \ x\in \bar F,$$ is a derivation of $A_f\otimes_F\bar F$ extending $\delta$ to $A_f\otimes_F \bar F$. Since $A_f\otimes_F \bar F \cong A\otimes_C \bar F$, we get a derivation of $A\otimes_C \bar F$. We will denote this derivation on $A\otimes \bar F$ by the same $\delta$. Let $M\in {\rm M_n}(\bar F)$ such that $\delta(V)=MV$. Then $(\delta-\delta_0)(V)=MV$ is  an  $\bar F-$linear derivation of $A\otimes_C \bar F$ and therefore $M \in \mathfrak g_{\bar F}$. Also, $$\delta(P^{-1}_fV)=(P^{-1}_f)'V+P^{-1}_fMV=((P^{-1}_f)'P_f+P^{-1}_fMP_f)(P^{-1}_fV).$$ Since $\delta(P^{-1}_fV)=N(P^{-1}_fV)$ and entries of $P^{-1}_fV$ form a basis of $A_f$ over $F$ we obtain $N=(P^{-1}_f)'P_f+P^{-1}_fMP_f.$
	 \end{proof}

\begin{corollary}
Let $B$ be a form of $A$ over $F$ then there is a derivation of $B$ extending the derivation of $F$.
\end{corollary}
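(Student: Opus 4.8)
The plan is to realize $B$ as one of the twisted forms $A_f$ and then read off a derivation directly from Proposition~\ref{mainresult}, so that the entire content reduces to the non-emptiness of the intersection appearing there. First I would invoke the classification of forms recalled in the introduction: since $B$ is a form of $A$ over $F$, its isomorphism class corresponds to a cocycle $f\in H^1({\rm Gal}(\bar F/F),{\rm G}({\bar F}))$, and hence $B\cong A_f$ as $F$-algebras, where $A_f$ is the form attached to $f$ in the construction preceding Proposition~\ref{mainresult}. Fixing a matrix $P_f\in{\rm GL_n}(\bar F)$ with $f_\sigma=P_f^{-1}\sigma(P_f)$, an $F$-algebra isomorphism $B\cong A_f$ transports derivations back and forth, so it suffices to produce a derivation of $A_f$ that extends the derivation of $F$.

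Next I would apply Proposition~\ref{mainresult}, which puts the derivations of $A_f$ extending the derivation of $F$ in bijection with the set $\{P_f^{-1}MP_f+(P_f^{-1})'P_f : M\in\mathfrak g_{\bar F}\}\cap{\rm M_n}(F)$. Thus the corollary is exactly the statement that this set is non-empty, and everything comes down to exhibiting a single $M\in\mathfrak g_{\bar F}$ for which $N:=P_f^{-1}MP_f+(P_f^{-1})'P_f$ has entries in $F$, equivalently is fixed by ${\rm Gal}(\bar F/F)$.

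To locate such an $M$, I would conjugate the membership condition by $P_f$: for $N\in{\rm M_n}(F)$ one has $M=P_fNP_f^{-1}+P_f'P_f^{-1}$, so the task is to find a rational $N$ for which the logarithmic derivative $P_f'P_f^{-1}$, modified by the inner term $P_fNP_f^{-1}$, lands in $\mathfrak g_{\bar F}$. This is precisely the problem of equipping the corresponding ${\rm G}_F$-torsor $Y_f$ with a compatible differential structure, and its solvability is where the smoothness of ${\rm G}$ enters: smoothness forces the obstruction to this differential descent to vanish (a Hilbert~90 / normal-basis type argument for the induced additive semilinear action on $\mathfrak g_{\bar F}$), which is exactly the non-emptiness established in \cite{juan2007equivariant} and recorded in the introduction.

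I expect this descent step—the vanishing of the obstruction, which depends essentially on the hypothesis that ${\rm G}$ is smooth—to be the only real obstacle; the rest is bookkeeping through the bijection of Proposition~\ref{mainresult}. Granting it, any admissible $N$ produced this way yields, via that bijection, a derivation of $A_f$ extending the derivation of $F$, and transporting along $B\cong A_f$ gives the desired derivation of $B$, completing the proof.
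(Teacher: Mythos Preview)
Your proposal is correct and follows essentially the same route as the paper: identify $B$ with a twisted form $A_f$, apply Proposition~\ref{mainresult} to reduce the existence of an extending derivation to the non-emptiness of the set $\{P_f^{-1}MP_f+(P_f^{-1})'P_f : M\in\mathfrak g_{\bar F}\}\cap{\rm M_n}(F)$, and then cite \cite[Section 5]{juan2007equivariant} for that non-emptiness. Your additional discussion of the descent obstruction and the role of smoothness is accurate elaboration, but the paper simply invokes the cited reference without spelling this out.
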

\begin{proof}
We know that $B$ is of the form $B=A_f$ for some $f\in H^1(Gal(\bar F/F), {\rm G}({\bar F}))$. Since the set $\{(P^{-1}_f)'P_f+P^{-1}_fMP_f):M\in\mathfrak g_{\bar F}\}\bigcap {\rm M_n}(F)$ is non-empty\cite[Section 5]{juan2007equivariant},  the result follows from the Proposition (\ref{mainresult}). 
\end{proof}

Let $f\in H^1(Gal(\bar F/F), {\rm G}({\bar F}))$ and $Y_f$ be the ${\rm G}_F-$torsor corresponding to $f$. Recall that the set of derivations $\{\delta \ :\ (Y_f,\delta) \ $is a differential \ ${\rm G}_F-$torsor$\}$ is characterized by the set $[P^{-1}_f\mathfrak g_{\bar F}P_f+(P_f^{-1})'P_f]\bigcap {\rm M_n}(F)$. Along with Proposition (\ref{mainresult}) we will have the following bijection.

\begin{corollary}
Let $f\in H^1(Gal(\bar F/F), {\rm G}({\bar F}))$, $A_f$ be the corresponding form and $Y_f$ be the corresponding ${\rm G}_F-$torsor then the set of all the derivations of $A_f$ extending the derivation of $F$ is in bijection with the set $\{\delta \ :\ (Y_f,\delta) \ $is a differential \ ${\rm G}_F-$torsor$\}$.
\end{corollary}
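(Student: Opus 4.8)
The plan is to exhibit both sets as parametrized by one and the same subset of ${\rm M_n}(F)$ and then to compose the two resulting bijections. Set $S := [P_f^{-1}\mathfrak g_{\bar F}P_f + (P_f^{-1})'P_f]\cap {\rm M_n}(F)$, with $P_f$ the matrix attached to $f$ via Equation (\ref{fmap}) and $\mathfrak g_{\bar F}\subseteq {\rm M_n}(\bar F)$ the Lie algebra identified in the Remark above. Crucially, this is exactly the set appearing in \emph{both} the statement of Proposition (\ref{mainresult}) and the recalled characterization of differential-torsor derivations from \cite[Section 5]{juan2007equivariant}; the whole argument turns on the observation that the \emph{same} $P_f$ and the \emph{same} embedding of $\mathfrak g_{\bar F}$ govern the two pictures.

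First I would record the bijection coming from Proposition (\ref{mainresult}): sending a derivation $\delta$ of $A_f$ that extends the derivation of $F$ to the matrix $N\in {\rm M_n}(F)$ determined by $\delta(P_f^{-1}V)=N(P_f^{-1}V)$ gives a bijection $\Phi$ from the set of such derivations onto $S$. Second, I would invoke the result of \cite[Section 5]{juan2007equivariant} recalled just above: a derivation $\delta$ of $F[Y_f]$ for which $(Y_f,\delta)$ is a differential ${\rm G}_F$-torsor is likewise determined by a matrix $N\in {\rm M_n}(F)$ describing its effect on the torsor coordinates through $P_f$, and $(Y_f,\delta)$ is differential precisely when $N\in S$; this yields a bijection $\Psi$ from the differential-torsor derivations onto the very same $S$.

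The conclusion is then formal: $\Psi^{-1}\circ\Phi$ is a bijection from the set of derivations of $A_f$ extending the derivation of $F$ onto the set $\{\delta : (Y_f,\delta)$ is a differential ${\rm G}_F$-torsor$\}$, and it has the transparent description of matching up derivations that induce the same matrix $N\in S$. The only point requiring genuine care — the ``obstacle,'' such as it is — is bookkeeping: one must check that the matrix $P_f$, the basis ordering encoded in $V$, and the identification $\mathfrak g_{\bar F}\subseteq {\rm M_n}(\bar F)$ used in \cite{juan2007equivariant} coincide with those fixed here, so that the two occurrences of $S$ are literally the same subset of ${\rm M_n}(F)$ rather than merely abstractly isomorphic ones. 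Once this compatibility is verified, no further computation is needed.
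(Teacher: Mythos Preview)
Your proposal is correct and follows exactly the route the paper takes: the paper's argument is the single sentence preceding the corollary, observing that Proposition~(\ref{mainresult}) and the result recalled from \cite[Section 5]{juan2007equivariant} put both sets in bijection with the same set $[P_f^{-1}\mathfrak g_{\bar F}P_f+(P_f^{-1})'P_f]\cap {\rm M_n}(F)$. You have simply spelled this out in more detail and flagged the compatibility-of-conventions point, which the paper leaves implicit.
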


\bibliographystyle{alpha}
	\bibliography{CM}
\end{document}